\newcommand{\nl}{\newline}
\newcommand{\R}{\mathbb{R}}
\newcommand{\N}{\mathbb{N}}
\newcommand{\Z}{\mathbb{Z}}
\newcounter{custom}
\newtheorem{theo}[]{Theorem}
\newtheorem{coro}[custom]{Corollary}
\newtheorem{lemm}[]{Lemma}
\newtheorem{prop}[]{Proposition}
\author{Ferdinand Jacobé de Naurois}
\begin{document}
	
	\begin{center}
		{\Large\textbf{Limits of harmonic functions on $\Z^d$}}
	\end{center}
	\begin{center}
		Ferdinand Jacobé de Naurois\\
		DMA - ENS Ulm\\
		\href{mailto:ferdinand.jacobe.de.naurois@ens.psl.eu}{ferdinand.jacobe.de.naurois@ens.psl.eu}
	\end{center}
	
	\begin{abstract}
		We give an example of a sequence of positive harmonic functions on $\Z^d$, $d\geq 2$, that converges pointwise to a non-harmonic function.
	\end{abstract}
	\bigskip
	Let us consider a probability measure $\mu $ on $\Z^d$. We say that a positive function $f :\Z^d\rightarrow \R_+$ is harmonic relative to the measure $\mu$ if and only if, for every $x_0\in\Z^d$, one has:
	$$f(x_0) = \int_{\Z^d}f(x_0+x)d\mu(x).$$
	The determination of harmonic functions on groups has been the subject of a lot of investigations. See for example \cite{Black},\cite{Choq-Den} and \cite{Nel} for abelian groups, and \cite{Mar} and \cite{Ben} for the case of nilpotent groups.\nl
	Recall that $\Z^d$ endowed with any irreducible (i.e. not supported on a proper subgroup) measure has the Liouville property, that is: any bounded harmonic function is constant. A proof of this fact is given in \cite[Theorem 3]{Black}. This fact also follows trivially from Lemma \ref{characterization of harmonic functions} below which is proved in \cite{Choq-Den}.\nl
	One important problem in the theory of harmonic functions is to determine whether or not they are stable under taking limits. One way to simply state this problem is the following:\nl\nl
	\textbf{Problem 1:} Let $\mu$ be a given measure on $\Z^d$. Is it true that a limit of non-negative harmonic functions is always harmonic ?\nl\nl
	Here, by "limit of harmonic functions" we mean point-wise convergence, namely $f_n\to f$ if $f_n(x)\to f(x)$ for every $x\in\Z^d$. We restrict ourselves to the study of non-negative harmonic functions or, which amounts the same, to harmonic functions that are bounded below.\nl
	In the case where the measure $\mu$ has finite support, point-wise convergence implies $L^1$ convergence, and the answer is therefore positive. We will show (Corollary \ref{harmonic super exponential}) that the same holds if $\mu$ has a finite super-exponential moment. However, in the general case, some counter-examples were announced by Jacques Deny (see the last page of \cite{Den}). In general, the answer to this problem is always positive when $d = 1$ (see Corollary \ref{harmonic closed for Z}), but can be negative whenever $d\geq 2$ (see Theorem \ref{counter example d 2} below). The main purpose of this short note is to give a counterexample for the case $d\geq 2$.\nl
	This counterexample will involve harmonic group homomorphisms from $\Z^d$ to the multiplicative group $\R_+^*$. The group homomorphisms $\Z^d\rightarrow \R_+^*$ will be called positive characters. They are of the form $n\mapsto e^{n\cdot s}$, where $s$ is some vector in $\R^d$ and $\cdot$ is the usual dot product. This character is denoted by $\chi_s$.\nl\nl
	Let us consider a probability measure $\mu$ on $\Z^d$. Given a sequence $(s_n)\in\R^d$ and $s\in \R^d$, it is clear that one has point-wise convergence $\chi_{s_n}\rightarrow \chi_s$ if and only if $s_n\rightarrow s$. Moreover, $\chi_s$ is harmonic if and only if the integral $\int_{\Z^d}\chi_s(x)d\mu(x)$ equals $1$. Therefore, if we can show that the set: 
	$$E_\mu^1 = \left\{s\in \R^d\;\Big|\int_{\Z^d}\chi_sd\mu = 1\right\}$$ is not closed, then we have a counterexample to Problem 1. In fact, the converse is also true, as stated in the Theorem \ref{stable iff E closed} below.\nl
	In order to prove this fact, we will need the following characterization of non-negative harmonic functions on $\Z^d$ (see \cite[Theorem 3]{Choq-Den}):
	\begin{lemm}\label{characterization of harmonic functions}
		Let $\mu$ be a probability measure on $\Z^d$. A function $f : \Z^d\rightarrow \R_+$ is harmonic relative to $\mu$ if and only if it satisfies:
		$$\forall x\in\Z^d,\;\;f(x) = \int_{s\in E_{\mu}^1}\chi_s (x)d\nu(s)$$
		for some non-negative measure $\nu$ on $E_\mu^1$.
	\end{lemm}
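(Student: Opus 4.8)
The plan is to prove the two implications separately, the forward (sufficiency) direction being a routine application of Tonelli's theorem and the converse being the substance of the Choquet--Deny theorem, which I would obtain via an extreme-point analysis.

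For the easy direction, suppose $f(x)=\int_{E_\mu^1}\chi_s(x)\,d\nu(s)$ for some non-negative $\nu$. Since each $\chi_s$ is a group homomorphism, $\chi_s(x_0+x)=\chi_s(x_0)\chi_s(x)$, and all integrands are non-negative, so Tonelli's theorem yields
$$\int_{\Z^d} f(x_0+x)\,d\mu(x)=\int_{E_\mu^1}\chi_s(x_0)\left(\int_{\Z^d}\chi_s(x)\,d\mu(x)\right)d\nu(s)=\int_{E_\mu^1}\chi_s(x_0)\,d\nu(s)=f(x_0),$$
using $\int_{\Z^d}\chi_s\,d\mu=1$ for $s\in E_\mu^1$. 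Hence $f$ is harmonic.

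For the converse I would set up a Choquet representation. We may assume $\mu$ is irreducible, so that a Harnack-type bound holds: from $f(x_0)=\sum_y \mu(\{y\})f(x_0+y)$ one controls $f(x)$ by $f(0)$ along paths in the support, and the same identity gives the minimum principle (a non-negative harmonic function vanishing somewhere vanishes identically). If $f(0)=0$ then $f\equiv 0$ and we take $\nu=0$; otherwise normalize so that $f(0)=1$ and consider $\mathcal{H}=\{f\geq 0 \text{ harmonic} : f(0)=1\}$. This is a convex set, and by the Harnack bound it is a closed subset of a product of compact intervals in $\R^{\Z^d}$, hence compact; since $\Z^d$ is countable it is also metrizable. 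I would then identify its extreme points. First, each $\chi_s$ with $s\in E_\mu^1$ is extreme: the measure $\mu_s:=\chi_s\,\mu$ is again a probability measure with the same (generating) support, and $g\mapsto g/\chi_s$ is an affine bijection from $\mu$-harmonic to $\mu_s$-harmonic functions carrying $\chi_s$ to the constant $1$; if $1=\tfrac12(\tilde g+\tilde h)$ with $\tilde g,\tilde h\in\mathcal{H}_{\mu_s}$, then $\tilde g,\tilde h$ are bounded, hence constant by the Liouville property recalled above, hence equal to $1$, so $\chi_s$ is extreme.

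The main obstacle is the reverse inclusion: showing every extreme $f$ is multiplicative. For $a$ in the support of $\mu$, harmonicity gives $\mu(\{a\})\,f(\cdot+a)\leq f$ pointwise, and $f(\cdot+a)$ is itself non-negative and harmonic; because $f$ spans an extreme ray of the cone of non-negative harmonic functions (the base $\mathcal{H}$ being its section at $0$), any non-negative harmonic function dominated by $f$ is a scalar multiple of $f$, whence $f(\cdot+a)=\gamma(a)f$. Evaluating at $0$ gives $\gamma(a)=f(a)$, and the minimum principle makes $f$ strictly positive, so the relation $f(x+a)=f(a)f(x)$ extends by iteration and inversion to all increments in the group generated by the support, namely all of $\Z^d$. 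Thus $f$ is a positive character $\chi_s$, and harmonicity forces $s\in E_\mu^1$. This commutativity-driven argument — propagating the multiplicative relation consistently across the whole group — is the delicate point, and it is exactly where abelianness of $\Z^d$ is used. Having identified the extreme set with $\{\chi_s : s\in E_\mu^1\}$, I would finish by Choquet's theorem: every $f\in\mathcal{H}$ is the barycenter of a probability measure supported on the extreme points, and since $s\mapsto\chi_s$ is a homeomorphism onto its image (as recalled before the lemma, $\chi_{s_n}\to\chi_s$ iff $s_n\to s$), pushing this measure forward produces a non-negative measure $\nu$ on $E_\mu^1$ with $f(x)=\int_{E_\mu^1}\chi_s(x)\,d\nu(s)$; rescaling by $f(0)$ handles the un-normalized case.
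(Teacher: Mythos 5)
The paper does not actually prove this lemma; it quotes it from Choquet--Deny \cite{Choq-Den}, so your proposal has to stand on its own. Your forward direction (Tonelli plus multiplicativity of $\chi_s$) is correct, and your identification of the extreme points is the right idea and essentially the classical one: the translates $f(\cdot+a)$ are again harmonic because $\Z^d$ is abelian, $\mu(\{a\})f(\cdot+a)$ is a non-negative harmonic minorant of $f$, and extremality forces $f(\cdot+a)=f(a)f$, which propagates to a positive character on the group generated by the support.

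The genuine gap is the compactness of $\mathcal{H}=\{f\geq 0 \text{ harmonic}\;:\;f(0)=1\}$, which you need in order to invoke Choquet's theorem. A Harnack-type bound can at best give pointwise boundedness, and even that fails here: for the measure of Theorem \ref{counter example d 2} the set $E_\mu^1$ is unbounded in its first coordinate, so $\sup_{f\in\mathcal{H}}f(e_1)\geq\sup_{s\in E_\mu^1}e^{s\cdot e_1}=+\infty$. More importantly, a Harnack bound cannot give closedness: closedness of $\mathcal{H}$ in the product topology is exactly the assertion that a pointwise limit of normalized non-negative harmonic functions is harmonic, i.e.\ a normalized form of Problem 1, and Theorem \ref{counter example d 2} exhibits a sequence $\chi_{s_n}\in\mathcal{H}$ converging pointwise to a non-harmonic $\chi_s$. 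So for precisely the measures this paper is about, $\mathcal{H}$ is neither bounded nor closed, and your appeal to Choquet's theorem on $\mathcal{H}$ begs the question. The standard repairs (working in the cone of excessive functions with a suitable cap, or the lattice-cone version of Choquet's representation theorem, as in \cite{Choq-Den}) are genuinely more delicate, which is presumably why the paper cites the result rather than proving it. A smaller point: ``we may assume $\mu$ is irreducible'' is not a reduction --- for $\mu=\delta_0$ every non-negative function is harmonic while only log-convex ones are mixtures of characters --- so an adaptedness hypothesis is really being used, and it is implicit in the cited theorem rather than derivable from the general case.
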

	\begin{theo}\label{stable iff E closed}
		Let $d\in\N^*$ and $\mu$ be a probability measure on $\Z^d$. The set of non-negative functions on $\Z^d$ that are harmonic relative to the measure $\mu$ is stable under taking point-wise limits if and only if $E_\mu^1$ is closed.
	\end{theo}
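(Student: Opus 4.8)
The plan is to prove both implications; the implication that non-closedness of $E_\mu^1$ breaks stability is immediate, while the converse needs a weak-compactness argument. For the easy direction, suppose $E_\mu^1$ is not closed and pick $s_n\in E_\mu^1$ with $s_n\to s\notin E_\mu^1$. As recalled above, a character $\chi_t$ is harmonic exactly when $\int_{\Z^d}\chi_t\,d\mu=1$, i.e. exactly when $t\in E_\mu^1$; thus each $\chi_{s_n}$ is a non-negative harmonic function while $\chi_s$ is not. Since $s_n\to s$ forces $\chi_{s_n}\to\chi_s$ pointwise, the limit $\chi_s$ is a non-harmonic pointwise limit of harmonic functions, so stability fails. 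This proves the "only if" part by contraposition.

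For the converse, assume $E_\mu^1$ is closed and let $(f_n)$ be non-negative harmonic functions with $f_n\to f$ pointwise. By Lemma \ref{characterization of harmonic functions} write $f_n(x)=\int_{E_\mu^1}\chi_s(x)\,d\nu_n(s)$ for non-negative measures $\nu_n$ on the closed set $E_\mu^1$. Evaluating at $x=0$ gives $\nu_n(E_\mu^1)=f_n(0)\to f(0)$, so the total masses are bounded. The goal is to extract a weak limit $\nu$ of the $\nu_n$, supported on $E_\mu^1$, and to pass to the limit in the integral representation so as to obtain $f(x)=\int_{E_\mu^1}\chi_s(x)\,d\nu(s)$; by Lemma \ref{characterization of harmonic functions} this says precisely that $f$ is harmonic.

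To produce the weak limit I would first establish tightness. For each coordinate vector $e_i$, the sequences $f_n(e_i)=\int e^{s_i}\,d\nu_n$ and $f_n(-e_i)=\int e^{-s_i}\,d\nu_n$ converge and hence are bounded; since $e^{s_i}+e^{-s_i}\ge e^{|s_i|}$, this bounds $\int e^{|s_i|}\,d\nu_n$ uniformly in $n$, and Markov's inequality yields $\nu_n(\{|s_i|>R\})\le Ce^{-R}$. Summing over $i$ shows the mass of $\nu_n$ outside the cube $[-R,R]^d$ is uniformly small, so $(\nu_n)$ is tight. By Prokhorov's theorem a subsequence $\nu_{n_k}$ converges weakly to a finite non-negative measure $\nu$, and since each $\nu_n$ charges only the closed set $E_\mu^1$, the portmanteau theorem applied to the open complement gives $\nu(\R^d\setminus E_\mu^1)=0$, so $\nu$ is carried by $E_\mu^1$.

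The main obstacle is the final step: because $s\mapsto\chi_s(x)=e^{s\cdot x}$ is unbounded, weak convergence alone does not yield convergence of the integrals, and this is exactly where the non-compactness of $E_\mu^1$ could in principle let mass escape. I would resolve it with a uniform-integrability estimate. For fixed $x\in\Z^d$ and $T>1$, on the set $\{\chi_s(x)>T\}$ one has $e^{s\cdot x}\le T^{-1}e^{2s\cdot x}$, whence $\int_{\{\chi_s(x)>T\}}\chi_s(x)\,d\nu_n\le T^{-1}\int e^{s\cdot(2x)}\,d\nu_n=T^{-1}f_n(2x)$, which is bounded uniformly in $n$ and tends to $0$ as $T\to\infty$. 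Thus $\{\chi_s(x)\}$ is uniformly $\nu_n$-integrable; combined with the weak convergence $\nu_{n_k}\rightharpoonup\nu$ (truncate $\chi_s(x)$ at height $T$, apply weak convergence to the bounded truncation, and control the tail) this gives $f(x)=\lim_k\int\chi_s(x)\,d\nu_{n_k}=\int_{E_\mu^1}\chi_s(x)\,d\nu(s)$. By Lemma \ref{characterization of harmonic functions}, $f$ is harmonic, which completes the proof.
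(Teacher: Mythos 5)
Your proof is correct and follows essentially the same route as the paper: the easy direction via a sequence of harmonic characters $\chi_{s_n}\to\chi_s$, and the converse via the integral representation of Lemma \ref{characterization of harmonic functions}, a tightness estimate obtained by evaluating $f_n$ at the points $\pm e_i$ (your direct Markov-inequality version is equivalent to the paper's argument by contradiction using $\sum_{x\in\pm e}\chi_s(x)\geq e^{|s|}$), and Prokhorov's theorem. In fact you are more careful than the paper on the last step: your uniform-integrability bound $\int_{\{\chi_s(x)>T\}}\chi_s(x)\,d\nu_n\leq T^{-1}f_n(2x)$ explicitly justifies passing to the limit in the integral of the unbounded function $s\mapsto\chi_s(x)$, a point the paper's proof leaves implicit.
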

	\begin{proof}We denote by $|\cdot|$ the supremum norm on $\R^d$. If $E_\mu^1$ is not closed, then for any sequence $(s_n)$ of elements of $E_\mu^1$ converging to some $s\in\R^d\backslash E_\mu^1$, the sequence $\chi_{s_n}$ of harmonic (relative to $\mu$) positive characters converges point-wise to $\chi_s$ which is not harmonic.\nl
		Let us now focus on the other direction. Suppose $E_\mu^1$ is closed, and let $(f_n)$ be a sequence of non-negative functions on $\Z^d$ which are harmonic relative to $\mu$, such that the sequence $(f_n)$ converges point-wise to some non-negative function $f : \Z^d\rightarrow \R_+$. Using Lemma \ref{characterization of harmonic functions}, one gets a sequence of non-negative measures $(\nu_n)$ on $E_\mu^1$ such that for every $n\in\N^*$, one has:
		$$\forall x\in\Z,\; f_n(x) = \int_{s\in E_\mu^1}\chi_s(x)d\nu_n(s).$$
		Now, one has $\nu_n(E_\mu^1) = \int_{s\in E_\mu^1}\chi_s(0)d\nu_n(s) = f_n(0)$ and the sequence $(f_n(0))$ converges, so $(\nu_n(E_\mu^1))$ is a bounded sequence. Since $E_\mu^1$ is a complete metric space, in order to extract a weakly convergent sub-sequence of $(\nu_n)$, it suffices to show that the sequence of measures $(\nu_n)$ is tight. \nl
		Suppose it is not. Then there is some $\delta > 0$ such that for every $k\in\N^*$, there is a $n_k$ such that $\nu_{n_k}(E_\mu^1\cap\{s\in\R^d \;|\;|s|\geq k \}) > \delta$.
		Denoting by $e = \{e_1,\cdots,e_d\}$ the canonical basis of $\R^d$, one then has for every $k\geq1$ :
		\begin{align*}
			\sum_{x\in \pm e}f_{n_k}(x) &\geq \sum_{x\in \pm e}\int_{s\in E_\mu^1\;,\; |s|\geq k}\chi_s(x)d\nu_{n_k}(s) = \int_{s\in E_\mu^1\;,\; |s|\geq k}\sum_{x\in \pm e}\chi_s(x)d\nu_{n_k}(x)\\
			&\geq \int_{s\in E_\mu^1\;,\; |s|\geq k}e^{{|s|}}d\nu_{n_k}(x) \geq e^{{k}} \nu_{{n_k}}(E_\mu^1\cap\{s\in\R^d \;|\;|s|\geq k\})\geq e^{k}\delta.
		\end{align*}
		Since the sequence $(f_n(x))$ converges for every $x\in\pm e$, it is bounded above, so one gets a contradiction upon letting $k$ go to $+\infty$ in the above inequality. Therefore, the sequence $(\nu_n)$ of measures is tight and we can thus extract a weakly convergent sub-sequence. Denoting by $\nu$ the limit, it follows that for every $x\in\R^d$ :
		$$f(x) = \int_{s\in E_\mu^1}\chi_s(x)d\nu(s)$$
		and $f$ is therefore harmonic.
	\end{proof}
	\noindent Note that this implies that if there is a sequence of non-negative harmonic functions which converges point-wise to a non-harmonic function, then it is also true that there is such a sequence made only of harmonic positive characters.\nl
	\noindent It follows from this theorem that understanding the stability of non-negative harmonic functions under taking point-wise limits amounts to studying the closure of $E_\mu^1$. As a consequence, we may apply this criterion to show that on $\Z$, a point-wise limit of non-negative harmonic functions is always harmonic.
	\begin{lemm}\label{E compact on Z}
		Let $\mu$ be a probability measure on $\Z$. Then, $E_\mu^1$ has at most two points.
	\end{lemm}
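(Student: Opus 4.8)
The plan is to recast the defining condition of $E_\mu^1$ as a level set of a single real-variable function and then exploit convexity. Writing $\mu(n)$ for the mass $\mu(\{n\})$, set
$$\varphi(s) = \int_{\Z}\chi_s\,d\mu = \sum_{n\in\Z}\mu(n)e^{ns},$$
so that $E_\mu^1 = \{s\in\R \mid \varphi(s)=1\}$ is precisely the set on which $\varphi$ takes the value $1$. The whole argument rests on showing that $\varphi$ is \emph{strictly} convex, because a strictly convex function attains any prescribed value at most twice.

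To obtain strict convexity without worrying about differentiability or about the domain on which $\varphi$ is finite, I would argue termwise. For $s_1\neq s_3$ and $\lambda\in(0,1)$, the weighted AM--GM inequality gives, for each $n$,
$$e^{n(\lambda s_1+(1-\lambda)s_3)} = \bigl(e^{ns_1}\bigr)^{\lambda}\bigl(e^{ns_3}\bigr)^{1-\lambda} \leq \lambda e^{ns_1}+(1-\lambda)e^{ns_3},$$
with equality if and only if $e^{ns_1}=e^{ns_3}$, i.e. if and only if $n=0$. Multiplying by $\mu(n)\geq 0$ and summing yields $\varphi(\lambda s_1+(1-\lambda)s_3)\leq \lambda\varphi(s_1)+(1-\lambda)\varphi(s_3)$, and this inequality is strict as soon as $\mu$ assigns positive mass to some $n\neq 0$.

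The conclusion is then immediate: if $E_\mu^1$ contained three distinct points $s_1<s_2<s_3$, writing $s_2=\lambda s_1+(1-\lambda)s_3$ with $\lambda\in(0,1)$ would force $\varphi(s_2)<\lambda\cdot 1+(1-\lambda)\cdot 1=1$, contradicting $s_2\in E_\mu^1$; note that the convex-combination bound automatically keeps $\varphi(s_2)$ finite, so no convergence issue arises from comparing only these three points. The one place I would be careful about is exactly the strict convexity: the sole measure for which it fails is $\mu=\delta_0$, where $\varphi\equiv 1$ and $E_\mu^1=\R$. This degenerate case must be set aside — it is in any event harmless for the sequel, since $\R$ is closed — and for every other probability measure on $\Z$ the termwise argument applies and gives $|E_\mu^1|\leq 2$.
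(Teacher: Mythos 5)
Your proof is correct and takes essentially the same route as the paper, which likewise deduces the bound from strict convexity of $s\mapsto\int_\Z\chi_s\,d\mu$ (stated there in one line, as a sum of strictly convex functions). Your remark about the degenerate case $\mu=\delta_0$ is a genuine catch that the paper's one-line argument overlooks: there $\varphi\equiv 1$ and $E_\mu^1=\R$, so the lemma as literally stated fails, although, as you note, this is harmless for Corollary \ref{harmonic closed for Z} since $\R$ is closed.
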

	\begin{proof}
		The function $s\mapsto \int_{\Z^d}\chi_s(x)d\mu(x)$ is strictly convex, as sum of such functions.
	\end{proof}
	\begin{coro}\label{harmonic closed for Z}
		Let $\mu$ be a probability measure on $\Z$. Then, every point-wise limit of non-negative harmonic functions on $\Z$ relative to the measure $\mu$ is harmonic.
	\end{coro}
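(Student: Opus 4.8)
The plan is to deduce the statement directly from the two preceding results: Lemma \ref{E compact on Z} pins down the geometry of $E_\mu^1$, while Theorem \ref{stable iff E closed} converts that geometric information into the desired stability property. Since all of the substantive work has already been done, the corollary should follow in a couple of lines.

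First I would invoke Lemma \ref{E compact on Z}, which asserts that, for a probability measure $\mu$ on $\Z$, the set $E_\mu^1 \subseteq \R$ consists of at most two points. The next observation is purely topological: any finite subset of a metric space — in particular of $\R$ — is closed. Hence $E_\mu^1$ is closed.

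Finally, I would apply Theorem \ref{stable iff E closed} in the case $d = 1$, so that $\Z^d = \Z$. Since $E_\mu^1$ is closed, the theorem guarantees that the set of non-negative functions harmonic relative to $\mu$ is stable under point-wise limits, which is precisely the assertion of the corollary.

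I do not anticipate any real obstacle here, because the genuine content lives in Lemma \ref{E compact on Z} (strict convexity of $s \mapsto \int_\Z \chi_s \, d\mu$ forcing at most two solutions of the equation $\int_\Z \chi_s \, d\mu = 1$) and in Theorem \ref{stable iff E closed}. The only point worth checking is that Theorem \ref{stable iff E closed} is applicable when $d = 1$, which it is, since it is stated for every $d \in \N^*$.
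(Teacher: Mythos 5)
Your proposal is correct and matches the paper's own proof, which simply cites Theorem \ref{stable iff E closed} and Lemma \ref{E compact on Z}; you merely make explicit the intermediate observation that a set with at most two points is closed.
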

	\begin{proof}
	Use Theorem \ref{stable iff E closed} and Lemma \ref{E compact on Z}.		
	\end{proof}
	\noindent However, in the general case where $d\geq 2$, this result fails. We construct a counterexample.
	\begin{theo}\label{counter example d 2}
		For every $d\geq 2$, there exist a probability measure $\mu$ on $\Z^d$ such that $E_\mu^1$ is non-closed. In particular, for such a measure $\mu$, there exists point-wise limits of harmonic functions which are not harmonic.
	\end{theo}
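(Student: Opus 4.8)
The plan is to reduce to $d=2$ and there exhibit an explicit probability measure whose Laplace transform
$$L(s)=\int_{\Z^2}\chi_s\,d\mu=\sum_{x}\mu(x)e^{x\cdot s}$$
has a non-closed level set $\{L=1\}=E_\mu^1$, and then invoke Theorem \ref{stable iff E closed}. For the reduction, given $\mu$ on $\Z^2$ with $E_\mu^1$ non-closed, I would push it onto the first two coordinates of $\Z^d$ by $\tilde\mu(x_1,x_2,0,\dots,0)=\mu(x_1,x_2)$; since $\int\chi_s\,d\tilde\mu$ depends only on $(s_1,s_2)$ and equals $L(s_1,s_2)$, one gets $E_{\tilde\mu}^1=E_\mu^1\times\R^{d-2}$, which is non-closed as soon as $E_\mu^1$ is. So it suffices to treat $d=2$.

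The mechanism I would exploit is that $L$, being a sum of the continuous convex functions $s\mapsto e^{x\cdot s}$, is convex and lower semicontinuous but may fail to be upper semicontinuous on the boundary of its domain of finiteness $D=\{L<\infty\}$. Concretely, I look for a point $s^\ast\in\partial D\cap D$ at which $L(s^\ast)<1$ while $L$ is unbounded in every neighbourhood of $s^\ast$; the intermediate value theorem then forces points of $\{L=1\}$ to accumulate at $s^\ast\notin E_\mu^1$. To produce such a downward spike I would spread mass along the parabola $x_n=(n^2,n)$, $n\ge1$, with stretched-exponential weights $\mu(x_n)=\alpha\,e^{-n-\sqrt n}$, add a point mass $p$ at $(0,-1)$, and place the remaining mass $\mu_0$ at the origin so that the total is $1$. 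The factor $n^2$ in the first coordinate makes the parabola sum converge for every $s_2$ as soon as $s_1<0$, so that $D=\{s_1<0\}\cup(\{0\}\times(-\infty,1])$, with candidate escape point $s^\ast=(0,1)$.

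Three facts then have to be checked. First, $L(0,1)=\mu_0+\alpha\sum_n e^{-\sqrt n}+p\,e^{-1}$; writing $\mu_0=1-\alpha\sum_n e^{-n-\sqrt n}-p$ this equals $1+\alpha\sum_n e^{-\sqrt n}(1-e^{-n})-p(1-e^{-1})$, which is $<1$ provided $p$ is large compared with $\alpha$ (the mass at $(0,-1)$, where $\chi_{(0,1)}$ takes the value $e^{-1}<1$, must outweigh the boosted parabola terms). Second, continuity from the side $s_2\le 1$: monotone convergence gives $L(-\delta,1)\uparrow L(0,1)<1$ as $\delta\to 0^+$, so there are points with $L<1$ arbitrarily close to $s^\ast$. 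Third, and this is the crux, the blow-up from the side $s_2>1$: along the path $s_1=-\delta,\ s_2=1+\delta^{1/4}$ the parabola sum becomes $\alpha\sum_n e^{-\sqrt n-\delta n^2+\delta^{1/4}n}$, whose term at $n\approx\tfrac12\delta^{-3/4}$ has exponent $\sim \tfrac14\delta^{-1/2}-\tfrac1{\sqrt2}\delta^{-3/8}\to+\infty$, so $L\to+\infty$.

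Granting these, every small ball around $s^\ast$ meets the open half-plane $\{s_1<0\}$ in a connected set on which $L$ (being continuous there) takes values both below and above $1$, hence contains a point of $E_\mu^1$; thus $E_\mu^1$ accumulates at $s^\ast$, while $L(s^\ast)<1$ shows $s^\ast\notin E_\mu^1$, so $E_\mu^1$ is not closed. The main obstacle is precisely the tension in the third step: the tail must be heavy enough (sub-exponential at the threshold $s_2=1$) for $L$ to blow up on approach to $s^\ast$, yet light enough to remain summable at $s^\ast$, while the downhill mass keeps $L(s^\ast)$ strictly below $1$. The weights $\alpha e^{-n-\sqrt n}$ together with the scaling $s_2-1\sim\delta^{1/4}$ are what reconcile these competing demands, and securing this balance — rather than any single estimate — is the real content of the construction.
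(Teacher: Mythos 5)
Your proposal is correct and follows essentially the same strategy as the paper: both reduce to $d=2$ via a measure supported on a coordinate plane, place mass along a parabola with weights of the form (exponential)$\times$(sub-exponential correction) so that the Laplace transform is finite but strictly less than $1$ at a boundary point of its domain of finiteness while blowing up arbitrarily close by, and then use the intermediate value theorem to produce points of $E_\mu^1$ accumulating at that excluded point. The differences (your curve $(n^2,n)$ with weights $\alpha e^{-n-\sqrt n}$ plus auxiliary atoms versus the paper's $(n,-n^2)$ with weights $e^{100n-n^2}/(Mn^2)$, and your two-dimensional IVT versus the paper's one-dimensional IVT along vertical lines) are cosmetic.
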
	
	\begin{proof}
		\noindent We will provide a counterexample for the case $d = 2$. It can be upgraded to a counterexample for any $d \geq 2$ using the following method. Let $\mu$ be a measure on $\Z^2$ and a sequence $(s_n)_{n\in \N}$ of elements of $\R^2$ which converges to some $s\in \R^2$, such that $\int_{\Z^2}\chi_{s_n}d\mu =1$ for every $n$ but $\int_{\Z^2} \chi_s d\mu \neq 1$. Then, one may define $t_n = (s_n,0,\cdots,0)\in \R^d$ for every $d\geq 2$, and $t = (s,0\cdots,0)$. Then, extend the measure $\mu$ to $\Z^d$ by setting $\nu((x_1,\cdots,x_d)) = \mu(x_1,x_2)$ if $x_3=\cdots=x_d=0$ and $0$ else.\nl
		Then it follows that $\int_{\Z^d}\chi_{t_n}d\nu = \int_{\Z^2}\chi_{s_n}d\mu$ for every $n$, and the same holds for $s$ and $t$, so $E_\nu^1$ is not closed.\nl\nl
		We will now construct a counterexample in the case where $d = 2$.
		Let $\mu_{n,m}\geq 0$ be defined as follows:
		\begin{align*}
			\mu_{n,m} &= \frac{1}{M}\frac{e^{100\cdot n-n^2}}{n^2}\;\;\;&\text{if  $n> 0$ and $m = -n^2$}\\
			\mu_{n,m} &= 0 \;\;\;&\text{else.}
		\end{align*}
		where $M$ is defined so that the $\mu_{n,m}$ sum to $1$:
		$$M = \sum_{n = 1}^{+\infty}\frac{e^{100\cdot n-n^2}}{n^2}.$$
		Note that $M\geq e^{99}$.
		Let $f : \R^2 \rightarrow \R_+\cup \{+\infty\}$ be the function defined by the following equation:
		$$\forall x,y\in \R,\;\; f(x,y) = \sum_{n,m\in \Z} \mu_{n,m}e^{nx}e^{my} = \frac{1}{M}\sum_{n=1}^{+\infty}\frac{e^{100\cdot n(x+1)- n^2(y+1)}}{n^2}.$$
		We turn our attention to the point $A = (-1,-1)$. One has $f(A) = \frac{\zeta(2)}{M} < 1$ so $A\notin E_\mu^1$, however we will prove that $A$ lies in the closure of $E_\mu^1$.\nl\nl
		For all $x > 0$, let:
		\begin{align*}
			g_x : \R&\rightarrow\R_+\cup \{+\infty\}\\
			y&\mapsto f(x-1,y-1).
		\end{align*}
		Note that for every $x > 0$, every $y\in \R$ one has:
		$$g_x(y) = \frac{1}{M}\sum_{n=1}^{+\infty}\frac{e^{100\cdot nx - n^2y}}{n^2}.$$
		Let $x > 0$ be fixed for now.\nl
		One has $g_x(y) = +\infty$ whenever $y \leq 0$, and $g_x$ is decreasing and continuous on $]0,+\infty[$. Moreover, by monotone convergence or Fatou's lemma, one has $g_x(y)\xrightarrow[y\rightarrow 0^+]{} +\infty$, and clearly $g_x(y)\xrightarrow[y\rightarrow +\infty]{}  0$ (by dominated convergence). Therefore, by the intermediate value theorem, one gets a unique $y_x > 0$ such that $g_x(y_x) = 1$, that is $f(x-1,y_x-1) = 1$.
		\nl\nl
		The only thing left to show is that $y_x\xrightarrow[x\rightarrow 0^+]{} 0$ : if this is true, then one has $(x-1,y_x-1)\xrightarrow[x\rightarrow 0^+]{}A $ with $(x-1,y_x-1)\in E$ for all $x > 0$, so $A$ is in the closure of $E$ but not in $E$.\nl
		Let us now prove that $y_x\xrightarrow[x\rightarrow 0^+]{} 0$. If it is not true, then one can find $\varepsilon > 0$ and a sequence $(x_k)_{k\geq 0}$ of positive real numbers with limit $0$ such that $y_{x_k}\geq \varepsilon$ for all $k\in \N$. We then get for all $k\in \N$ :
		$$1 = g_{x_k}(y_{x_k}) = \frac{1}{M}\sum_{n=1}^{+\infty}\frac{e^{{100\cdot nx_k}-n^2y_{x_k}}}{n^2}\leq \frac{1}{M}\sum_{n=1}^{+\infty}\frac{e^{{100\cdot nx_k}-n^2\varepsilon}}{n^2}.$$
		For $k$ large enough, one has $x_k\leq 1$, so the $n$-th summand in the sum is bounded by $e^{100\cdot n-\epsilon n^2}/{n^2}$ which sums to a finite value. So we can use the dominated convergence theorem and let $k\rightarrow +\infty$ to get:
		$$1\leq \frac{1}{M}\sum_{n=1}^{+\infty}\frac{e^{-n^2\epsilon}}{n^2}\leq \frac{\zeta(2)}{M}$$
		which is a contradiction, since $M\geq e^{99}$. This concludes the proof.
	\end{proof}
	\noindent However, if the measure $\mu$ decreases sufficiently fast, then the set of positive harmonic functions is stable under limits. We define a probability measure $\mu$ on $\Z^d$ to have a "finite super-exponential moment" if and only if there is some $\varepsilon > 0$ such that $\int_{\Z^d}e^{|x|^{1+\varepsilon}}dx < +\infty$ for some norm $|.|$ on $\R^d$.\nl
	In the case of such a measure, then the conclusion of Corollary \ref{harmonic closed for Z} also holds when $d\geq 2$ : any point-wise limit of non-negative harmonic functions is also harmonic. This is proved in Corollary \ref{harmonic super exponential}. In particular, one can see that the counterexample constructed above has finite exponential moment (that is, $\int e^{a|x|}d\mu(x)$ converges for some $a> 0$) but has no finite super-exponential moment.
	\begin{coro}\label{harmonic super exponential}
		Let $d\geq 1$ and $\mu$ be a probability measure on $\Z^d$ which has a finite super-exponential moment. Then every point-wise limit of positive harmonic functions on $\Z^d$ relative to the measure $\mu$ is harmonic.
	\end{coro}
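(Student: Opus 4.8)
The plan is to deduce the statement from Theorem~\ref{stable iff E closed} by showing that the super-exponential moment hypothesis forces $E_\mu^1$ to be closed. Introduce the Laplace-type transform $L(s) = \int_{\Z^d}\chi_s\,d\mu = \int_{\Z^d} e^{x\cdot s}\,d\mu(x)$, so that $E_\mu^1 = L^{-1}(\{1\})$ by definition. Since $\{1\}$ is closed in $\R$, it suffices to show that $L$ is a finite-valued continuous function on all of $\R^d$; then $E_\mu^1$, being the preimage of a closed set under a continuous map, is closed, and Theorem~\ref{stable iff E closed} applies directly.

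First I would establish finiteness: $L(s) < +\infty$ for every $s\in\R^d$. This is where the hypothesis enters. Fix a norm $|\cdot|$ and an $\varepsilon > 0$ with $\int_{\Z^d} e^{|x|^{1+\varepsilon}}\,d\mu < +\infty$; by equivalence of norms we may take $|\cdot|$ to be the one appearing in an estimate of the form $x\cdot s \leq C|s|\,|x|$. For fixed $s$, the quantity $x\cdot s - |x|^{1+\varepsilon}$, which is at most $C|s|\,|x| - |x|^{1+\varepsilon}$, tends to $-\infty$ as $|x|\to\infty$ and is therefore bounded above by some constant $C_s$. Hence $e^{x\cdot s}\leq e^{C_s}\,e^{|x|^{1+\varepsilon}}$ for all $x$, and summing against $\mu$ gives $L(s)\leq e^{C_s}\int_{\Z^d} e^{|x|^{1+\varepsilon}}\,d\mu < +\infty$.

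For continuity I would use dominated convergence. Given $s_n\to s$, all the $s_n$ lie in a common ball $\{|s|\leq R\}$, and the same estimate carried out uniformly yields $e^{x\cdot s_n}\leq e^{C_R}\,e^{|x|^{1+\varepsilon}}$ with $C_R$ independent of $n$. This furnishes a single $\mu$-integrable dominating function, so $L(s_n) = \int_{\Z^d} e^{x\cdot s_n}\,d\mu \to \int_{\Z^d} e^{x\cdot s}\,d\mu = L(s)$. Hence $L$ is continuous, $E_\mu^1$ is closed, and the corollary follows. Alternatively, once $L$ is finite everywhere one may simply invoke that a convex function, which $L$ is (being an average of the convex maps $s\mapsto e^{x\cdot s}$), is automatically continuous on the interior of its domain of finiteness, here all of $\R^d$.

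The only genuinely delicate point is the finiteness estimate: it is precisely the super-exponential, rather than merely exponential, decay that guarantees $L(s)<+\infty$ simultaneously for every $s\in\R^d$. With only a finite exponential moment, $L$ would be finite just on a bounded neighbourhood of the origin and could blow up across the boundary of its domain of finiteness; this is exactly the mechanism exploited in Theorem~\ref{counter example d 2}, and it explains why the counterexample constructed there has a finite exponential moment but no super-exponential moment.
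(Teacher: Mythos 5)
Your proposal is correct and follows essentially the same route as the paper: reduce to the closedness of $E_\mu^1$ via Theorem~\ref{stable iff E closed}, then dominate $e^{x\cdot s_n}$ by a constant multiple of the integrable function $e^{|x|^{1+\varepsilon}}$ and apply dominated convergence. You merely spell out the existence of the dominating constant (via $x\cdot s - |x|^{1+\varepsilon}\to-\infty$) and phrase the conclusion as continuity of $s\mapsto\int\chi_s\,d\mu$, which the paper leaves implicit.
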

	\begin{proof}
		Let $\mu$ be a probability measure on $\Z^d$ such that $\int_{\Z^d}e^{|x|^{1+\varepsilon}}d\mu(x) < +\infty$ for some $\varepsilon > 0$ and some norm $|\cdot |$ on $\R^d$.\nl
		By Theorem \ref{stable iff E closed}, it suffices to show that $E_\mu^1$ is closed.
		Let $(s_n)$ be a sequence of elements of $E_\mu^1$ which converges to some $s\in\R^d$. Then there is a constant $C$ such that $e^{x\cdot s_n} \leq Ce^{|x|^{1+\varepsilon}}$ for all $n\in\N^*$ and all $x\in\Z^d$, so by dominated convergence one has $s\in E_\mu^1$.
	\end{proof}
	\noindent One may also ask more precisely what the closure $\overline{E_\mu^1}$ of $E_\mu^1$ looks like compared to $E_\mu^1$, for an arbitrary probability measure $\mu$. We conclude this article with a positive result in this direction:
	\begin{prop}\label{zero measure}
		Let $\mu$ be a probability measure on $\Z^d$. Then, $\overline{E_\mu^1}\backslash E_\mu^1$ has zero Lebesgue measure.
	\end{prop}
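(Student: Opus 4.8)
The plan is to recast everything in terms of the moment generating function
$L(s) = \int_{\Z^d}\chi_s\,d\mu = \sum_{x\in\Z^d}\mu(x)e^{x\cdot s}$, a function valued in $(0,+\infty]$, so that by definition $E_\mu^1 = \{s\in\R^d : L(s)=1\}$. Two structural properties of $L$ drive the argument, and I would establish them at the outset. First, $L$ is convex, being a nonnegative combination of the convex functions $s\mapsto e^{x\cdot s}$. Second, $L$ is lower semicontinuous: if $s_n\to s$ then Fatou's lemma (all summands being nonnegative) gives $L(s)\le\liminf_n L(s_n)$.

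The first real step is to use lower semicontinuity to pin down where the extra limit points can live. Given $s\in\overline{E_\mu^1}$, choose $s_n\in E_\mu^1$ with $s_n\to s$; then $L(s)\le\liminf_n L(s_n)=1$. Since $E_\mu^1$ is exactly the set $\{L=1\}$, any point of $\overline{E_\mu^1}\setminus E_\mu^1$ must satisfy $L(s)<1$. In particular $L(s)<+\infty$, so $s$ lies in the effective domain $D:=\{s\in\R^d : L(s)<+\infty\}$, which is convex (as the domain of finiteness of a convex function).

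The second step is to show that such a point cannot be interior to $D$. A finite convex function is continuous on the interior of its domain, so $L$ is continuous on $\mathrm{int}(D)$. If some $s$ with $L(s)<1$ were in $\mathrm{int}(D)$, then $L<1$ on an entire neighborhood of $s$, which is incompatible with $s$ being a limit of points $s_n$ at which $L(s_n)=1$. Hence every point of $\overline{E_\mu^1}\setminus E_\mu^1$ lies in $\partial D$, giving the key inclusion $\overline{E_\mu^1}\setminus E_\mu^1\subseteq\partial D$.

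The proof then concludes by invoking the classical fact that the boundary of a convex subset of $\R^d$ has zero Lebesgue measure: if $\mathrm{int}(D)=\varnothing$ then $D$ lies in an affine hyperplane and is null; otherwise, fixing an interior point as the origin, the scalings $t\overline{D}\subseteq\mathrm{int}(D)$ for $t<1$ force $|\overline{D}|=|\mathrm{int}(D)|$, whence $|\partial D|=0$. I expect the only delicate point to be the interplay at $\partial D$: one must deploy lower semicontinuity (to get $L\le 1$ on the closure) and continuity of convex functions on $\mathrm{int}(D)$ (to exclude interior points) in tandem, while separately disposing of the degenerate cases $D=\R^d$ (the difference is then empty, as $L$ is globally continuous) and $\mathrm{int}(D)=\varnothing$ (where $D$ already sits inside a hyperplane).
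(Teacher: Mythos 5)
Your proof is correct, and its skeleton matches the paper's: both reduce the statement to the inclusion $\overline{E_\mu^1}\setminus E_\mu^1\subseteq\partial E_\mu$ (your $D$ is exactly the paper's $E_\mu$) and then invoke the fact that the boundary of a convex subset of $\R^d$ is Lebesgue-null. Where you diverge is in how that inclusion is established. The paper's Lemma \ref{lemma closure} shows directly that $\overline{E_\mu^1}\cap \mathring{E_\mu}\subseteq E_\mu^1$ by an explicit domination: for $n$ large, $s_n$ lies in a small cube $s+[-\varepsilon,\varepsilon]^d\subseteq E_\mu$ and is a barycenter of its corners, so $\chi_{s_n}\leq\sum_i\chi_{c_i}$, and dominated convergence gives $L(s)=\lim_n L(s_n)=1$. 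You instead combine two soft facts: lower semicontinuity of $L$ (Fatou), which pins $L\leq 1$ on all of $\overline{E_\mu^1}$, and continuity of a convex function on the interior of its effective domain, which rules out $L(s)<1$ at an interior point that is a limit of points where $L=1$. The two routes are of essentially equal depth --- the continuity of convex functions on the interior of their domain is itself usually proved by a barycentric domination very much like the paper's --- but yours yields the slightly sharper byproduct that $L\leq 1$ on $\overline{E_\mu^1}$, so the exceptional points lie in $E_\mu$ itself and not merely on its boundary viewed from outside, while the paper's argument is more self-contained, requiring only dominated convergence rather than quoted convex-analysis regularity.
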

	\noindent With that goal in mind, we define the set $E_\mu = \{s\in \R^d \;|\;\int_{\Z^d}\chi_sd\mu < +\infty\}$. The following shows that the only points which can witness the non-closedness of $E_\mu^1$ must lie on the boundary of $E_\mu$.
	\begin{prop}
		The set of $\overline{E_\mu^1}\backslash E_\mu^1$ is contained in the boundary of $E_\mu$.
	\end{prop}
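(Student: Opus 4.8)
The plan is to show that a point of $\overline{E_\mu^1}\setminus E_\mu^1$ can lie neither in the interior of $E_\mu$ nor outside $\overline{E_\mu}$; since $\partial E_\mu=\overline{E_\mu}\setminus\interior{E_\mu}$, this yields the inclusion $\overline{E_\mu^1}\setminus E_\mu^1\subseteq\partial E_\mu$. The ``outside'' part is immediate: because $E_\mu^1\subseteq E_\mu$ we have $\overline{E_\mu^1}\subseteq\overline{E_\mu}$, so every point of $\overline{E_\mu^1}$ already belongs to $\overline{E_\mu}$. It therefore remains only to rule out interior points, and for this the single analytic ingredient I need is the continuity of $\Lambda(s):=\int_{\Z^d}\chi_s\,d\mu$ on $\interior{E_\mu}$.

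I would establish this continuity by a uniform domination argument in the spirit of Corollary \ref{harmonic super exponential}. Fix $s_0\in\interior{E_\mu}$ and choose $\delta>0$ so that the closed cube $C=\{t\in\R^d:|t-s_0|\le\delta\}$ is contained in $E_\mu$, where $|\cdot|$ denotes the supremum norm. Let $v_1,\dots,v_{2^d}$ be the vertices of $C$. Every $t\in C$ is a convex combination of the $v_j$, so by convexity of the exponential one gets $\chi_t(x)=e^{x\cdot t}\le\max_j e^{x\cdot v_j}\le\sum_{j}e^{x\cdot v_j}=:G(x)$ for all $x\in\Z^d$, and $\int_{\Z^d}G\,d\mu=\sum_j\Lambda(v_j)<+\infty$ since each vertex lies in $E_\mu$. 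Thus $G$ is a $\mu$-integrable majorant, uniform over $t\in C$, and the dominated convergence theorem shows that $\Lambda$ is continuous at $s_0$.

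With continuity in hand, suppose for contradiction that some $s\in\overline{E_\mu^1}\setminus E_\mu^1$ lies in $\interior{E_\mu}$. Pick a sequence $(s_n)$ in $E_\mu^1$ with $s_n\to s$. Choosing a cube $C\subseteq E_\mu$ centred at $s$ as above, we have $s_n\in C$ for $n$ large, so the common majorant $G$ lets us pass to the limit to obtain $\Lambda(s)=\lim_n\Lambda(s_n)=1$. This forces $s\in E_\mu^1$, contradicting the choice of $s$. Hence $s\notin\interior{E_\mu}$, and together with $s\in\overline{E_\mu}$ this gives $s\in\partial E_\mu$, as desired.

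I expect the only genuine obstacle to be the continuity step, that is, producing the uniform integrable majorant $G$; the convexity-of-the-exponential trick (already exploited implicitly in Corollary \ref{harmonic super exponential}) reduces it to a one-line bound, after which the remaining inclusions are pure point-set topology.
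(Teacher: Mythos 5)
Your proof is correct and follows essentially the same route as the paper: the trivial inclusion $\overline{E_\mu^1}\subseteq\overline{E_\mu}$ plus the key step that no point of $\overline{E_\mu^1}\setminus E_\mu^1$ can lie in $\mathring{E_\mu}$, which the paper isolates as Lemma \ref{lemma closure} and proves by exactly your argument (dominate $\chi_{s_n}$ by the sum of the characters at the vertices of a small cube inside $E_\mu$ via convexity of the exponential, then apply dominated convergence). Your phrasing in terms of continuity of $s\mapsto\int\chi_s\,d\mu$ on $\mathring{E_\mu}$ is just a repackaging of the same estimate.
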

	\begin{proof}
		This is a consequence of the following Lemma \ref{lemma closure}. Indeed, let $s$ be a point in $\overline{E_\mu^1}\backslash E_\mu^1$. If it is not in the boundary of $E_\mu$, then one must have $s\in \overline{E_\mu^1}\cap \mathring{E_\mu}$, so $s\in E_\mu^1$ by Lemma \ref{lemma closure}, which is a contradiction.
	\end{proof}
	\begin{lemm}\label{lemma closure}
		Let $\mu$ be a normalized measure on $\Z^d$. Then, $\overline{E_\mu^1}\cap \mathring{E_\mu}\subseteq E_\mu^1$.
	\end{lemm}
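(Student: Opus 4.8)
The plan is to read the statement as a continuity property of the moment generating function $\Phi(t) := \int_{\Z^d}\chi_t\,d\mu = \int_{\Z^d}e^{n\cdot t}\,d\mu(n)$, whose set of finiteness is exactly $E_\mu=\{t:\Phi(t)<+\infty\}$ and whose level set $\Phi^{-1}(\{1\})$ is $E_\mu^1$. I would take $s\in\overline{E_\mu^1}\cap\mathring{E_\mu}$ and fix a sequence $(s_k)$ in $E_\mu^1$ with $s_k\to s$, so that $\Phi(s_k)=1$ for every $k$. The whole lemma then reduces to showing that $\Phi$ is continuous at $s$: granting this, $\Phi(s)=\lim_k\Phi(s_k)=1$, hence $s\in E_\mu^1$.

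The key step is to produce, on a neighborhood of $s$, a single $\mu$-integrable dominating function, so that dominated convergence delivers this continuity. Since $s\in\mathring{E_\mu}$, I can choose $\rho>0$ so that the closed cube $Q=\prod_{i=1}^d[s_i-\rho,\,s_i+\rho]$ is contained in $E_\mu$; in particular each of the $2^d$ corners $s+\rho\varepsilon$, with $\varepsilon\in\{-1,+1\}^d$, satisfies $\Phi(s+\rho\varepsilon)<+\infty$. For any $t\in Q$ and any $n\in\Z^d$ one has the coordinatewise bound $n_i t_i\le n_i s_i+\rho|n_i|$, whence
$$\chi_t(n)=e^{n\cdot t}\le e^{\,n\cdot s+\rho\sum_{i}|n_i|}=\prod_{i=1}^d e^{\,n_i s_i+\rho|n_i|}\le\prod_{i=1}^d\left(e^{\,n_i(s_i+\rho)}+e^{\,n_i(s_i-\rho)}\right)=\sum_{\varepsilon\in\{-1,+1\}^d}\chi_{s+\rho\varepsilon}(n).$$
The right-hand side is independent of $t$ and integrates to $\sum_{\varepsilon}\Phi(s+\rho\varepsilon)<+\infty$, so it is the desired dominating function, valid uniformly over $t\in Q$.

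With this in hand the conclusion is routine: for $k$ large enough $s_k\in Q$, and since $\chi_{s_k}(n)\to\chi_s(n)$ pointwise in $n$ while staying dominated by the integrable function above, the dominated convergence theorem gives $\Phi(s)=\lim_k\Phi(s_k)=1$, as desired. I expect the only genuine obstacle to be the construction of this dominating function, namely the passage from $e^{\,n\cdot s+\rho\sum_i|n_i|}$ to the finite sum of characters at the corners of $Q$; everything else is bookkeeping. One could instead invoke the general fact that a convex function — here $\Phi$, an integral of the convex maps $t\mapsto e^{n\cdot t}$ — is automatically continuous on the interior of its effective domain, but the explicit domination above keeps the argument in line with the elementary convergence arguments used earlier in this note.
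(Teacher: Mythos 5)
Your proof is correct and follows essentially the same route as the paper: dominate $\chi_t$ for $t$ in a small cube around $s$ contained in $E_\mu$ by the sum of the $2^d$ characters at the cube's corners, then apply dominated convergence. The only cosmetic difference is that you derive this domination by an explicit coordinatewise bound rather than by invoking convexity of the exponential on the barycentric representation of $t$, but the dominating function and the conclusion are identical.
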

	\begin{proof}
		Let $(s_n)$ be a sequence of elements of $E_\mu^1$ which converge to some $s\in \mathring{E_\mu}$. Let us prove that $s\in E_\mu^1$. For this, consider some $\varepsilon > 0$ such that $s + [ -\varepsilon,\varepsilon ]^d\subseteq {E_\mu}$ Now, one has for every $n\in \N^*$ large enough one has $s_n\in s + [ -\varepsilon,\varepsilon ]^d$, so $s_n$ is a barycenter of the $2^d$ corners $c_1,\cdots,c_{2^d}$ of the cube $s + [ -\varepsilon,\varepsilon ]^d$. Convexity of the exponential function therefore ensures that:
		$$\chi_{s_n}\leq \sum_{i=1}^{2^d}\chi_{c_i}.$$
		The right-hand side of the inequality has finite integral, so the dominated convergence theorem applies and proves that $s\in E_\mu^1$.
	\end{proof}

	\noindent To conclude the proof of Proposition \ref{zero measure}, note that $E_\mu$ is a convex set. Indeed, if $s,t\in \R^d$ are in $E_\mu$, then for every $\lambda\in [0,1]$ one has $\chi_{\lambda s+ (1-\lambda)t}\leq \lambda\chi_s+(1-\lambda)\chi_t$, so $\chi_{\lambda s+ (1-\lambda)t} $ has finite integral. We thus use the following result from \cite{Lang} (see Theorem 1 there) to conclude the proof of Proposition \ref{zero measure}.
	\begin{lemm}
		The boundary of any convex subset of $\R^d$ has zero Lebesgue measure.
	\end{lemm}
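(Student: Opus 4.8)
The plan is to sidestep the cited reference and give a short self-contained argument based on scaling the convex set toward an interior point. Write $C\subseteq\R^d$ for the convex set and let $m$ denote Lebesgue measure. First I would dispose of the degenerate case: if $\interior{C}=\emptyset$, then by convexity $C$ is contained in a proper affine subspace of $\R^d$; that subspace is closed and has dimension $<d$, hence is $m$-null, and since $\partial C\subseteq\overline{C}$ also lies in it, we are done. From now on I assume $\interior{C}\neq\emptyset$ and, after a translation (which preserves $m$), that $0\in\interior{C}$.

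The geometric heart of the argument is the claim that $t\,\overline{C}\subseteq\interior{C}$ for every $t\in(0,1)$. To prove it, pick $r>0$ with $B(0,r)\subseteq C$ and fix $x\in\overline{C}$. Any point of $tx+(1-t)B(0,r)$ has the form $t\,x+(1-t)z$ with $z\in B(0,r)\subseteq C$, i.e.\ it is a convex combination of $x\in\overline C$ and $z\in C$; approximating $x$ by points of $C$ shows that a full open ball around $tx$ lies in $C$, so $tx\in\interior{C}$. Consequently $\partial C=\overline{C}\setminus\interior{C}\subseteq\overline{C}\setminus t\,\overline{C}$, and since $0\in\overline C$ and $\overline C$ is convex we also have $t\,\overline{C}\subseteq\overline{C}$.

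From here the estimate is immediate when $C$ is bounded: using the scaling law $m(t\,\overline C)=t^d\,m(\overline C)$ and $m(\overline C)<+\infty$ gives $m(\partial C)\le m(\overline C)-t^d\,m(\overline C)=(1-t^d)\,m(\overline C)$, and letting $t\to 1^-$ forces $m(\partial C)=0$. To handle unbounded $C$ as well, I would localize by intersecting with a ball $B(0,R)$ and writing $\partial C\cap B(0,R)\subseteq\bigl(\overline C\cap B(0,R)\bigr)\setminus\bigl(t\,\overline C\cap B(0,R)\bigr)$. Since $t\,\overline C\cap B(0,R)=t\bigl(\overline C\cap B(0,R/t)\bigr)$ has measure $t^d\,m\bigl(\overline C\cap B(0,R/t)\bigr)$, and $m\bigl(\overline C\cap B(0,R/t)\bigr)\to m\bigl(\overline C\cap B(0,R)\bigr)$ as $t\to 1^-$ by continuity of measure from above (the sphere $\partial B(0,R)$ being $m$-null), we get $m(\partial C\cap B(0,R))=0$ for every $R$, whence $m(\partial C)=0$.

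The two steps I would write out most carefully are the inclusion $t\,\overline C\subseteq\interior C$ (specifically the approximation that upgrades boundary points $x\in\overline C$ to genuine interior points $tx$) and, in the unbounded case, the continuity-of-measure argument, which relies on $\overline C\cap B(0,R/t)$ having finite measure and decreasing to $\overline C\cap\overline{B(0,R)}$ as $t\uparrow 1$. Everything else reduces to the elementary scaling behaviour $m(tS)=t^d\,m(S)$ of Lebesgue measure, so I expect the main obstacle to be purely bookkeeping rather than conceptual.
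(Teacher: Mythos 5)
Your argument is correct, and it is essentially the argument the paper has in mind: the published text merely cites \cite{Lang} for this lemma, but the draft proof left in the source uses exactly your dilation step ($\overline{C}\subseteq(1+\delta)\mathring{C}$ for every $\delta>0$, equivalently $t\,\overline{C}\subseteq\mathring{C}$ for $t\in(0,1)$, after reducing to $0\in\mathring{C}$ and disposing of the empty-interior case via an affine hyperplane) combined with the scaling law $m(tS)=t^dm(S)$. The only divergence is bookkeeping in the unbounded case, where the draft covers $\R^d$ by bounded convex open sets $B_n$ and uses $\text{Fr}(C)\cap B_n\subseteq \text{Fr}(C\cap B_n)$ to reduce to the bounded case, whereas you localize the scaling estimate directly via continuity of measure; both routes are sound.
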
	
	
	\section*{Acknowledgements}
	
	{ I would like to express my gratitude to Emmanuel Breuillard and Anna Erschler for suggesting this question.}
	\bibliography{mybib}{}
	\bibliographystyle{plain}
	
\end{document}